\documentclass{amsart}

\usepackage{amssymb}


\usepackage{epsfig}



\newtheorem{thm}{Theorem}

\newtheorem{lem}[thm]{Lemma}
\newtheorem{cor}[thm]{Corollary}

\newtheorem{prop}[thm]{Proposition}


\theoremstyle{definition}
\newtheorem{defn}[thm]{Definition}

\newtheorem{say}[thm]{}
\newtheorem{exmp}[thm]{Example}

\newtheorem{ques}[thm]{Question}    

\newtheorem{rem}[thm]{Remark}

\newtheorem*{ack}{Acknowledgments}       
\newtheorem{notation}[thm]{Notation}

\newtheorem{defn-thm}[thm]{Definition--Theorem}  
\newtheorem{defn-lem}[thm]{Definition--Lemma}  

\theoremstyle{remark}


\setcounter{section}{0}

\let \cedilla =\c
\renewcommand{\c}[0]{{\mathbb C}}

\renewcommand{\o}[0]{{\mathcal O}}
\newcommand{\z}[0]{{\mathbb Z}}

\renewcommand{\r}[0]{{\mathbb R}}

\newcommand{\p}[0]{{\mathbb P}}

\newcommand{\q}[0]{{\mathbb Q}}
\newcommand{\map}[0]{\dasharrow}
\newcommand{\qtq}[1]{\quad\mbox{#1}\quad}

\newcommand{\supp}[0]{\operatorname{Supp}}

\def\into{\DOTSB\lhook\joinrel\to}

\usepackage[all]{xy}\xyoption{dvips}

\newcommand{\s}[0]{{\mathbb S}}
\newcommand{\sph}[0]{{\mathbb S}^2}
\newcommand{\trs}[0]{{\mathbb T}^2}
\newcommand{\kk}[0]{{\mathbb K}^2}
\newcommand{\rp}[0]{{\mathbb R}{\mathbb P}^2}

\newcommand{\li}[0]{{\mathbb L}}
\newcommand{\fib}[0]{{\mathbb F}}

\begin{document}

\bibliographystyle{amsalpha}

\title[Approximating curves on real rational surfaces]{Approximating curves on real rational surfaces}

\author{J\'anos Koll\'ar \and Fr\'ed\'eric Mangolte}

\address{Princeton University, Princeton NJ 08544-1000}

\email{kollar@math.princeton.edu}

\address{LUNAM Universit\'e, LAREMA, CNRS-Universit\'e d'Angers,  49045 Angers, vFrance.}

\email{frederic.mangolte@univ-angers.fr}

\today

\begin{abstract}
We give necessary and sufficient topological conditions for
 a simple closed curve on a real rational surface to be approximable by
 smooth rational curves. We also study  approximation by
 smooth rational curves with given  complex  self-intersection
number.
\end{abstract}

\maketitle

\section{Introduction}

As a generalization of the Weierstrass approximation theorem,
every $C^{\infty}$ map to a rational variety $\s^1\to X$
can be approximated, in the  $C^{\infty}$-topology,  by  real algebraic maps
$\r\p^1\to X$; see \cite{bk3}
and Definition \ref{epproximate.defn}. In this article we study the
following variant of this result.

\begin{ques} Let $X$ be a smooth real algebraic variety and $L\subset X(\r)$
a smooth, simple, closed curve.  Can it be
approximated, in the  $C^{\infty}$-topology,  by the real points
of a smooth rational curve $C\subset X$?
\end{ques}

\begin{defn}[Real algebraic varieties]
For us a {\it real algebraic variety} is an algebraic variety,
as in \cite{shaf}, that is
defined over $\r$. If $X$ is a real algebraic variety
then $X(\c)$ denotes the set of complex points and
 $X(\r)$ the set of real points.
(Note that frequently -- for instance in
the  book  \cite{bcr} --
 $X(\r)$ itself is called a real algebraic variety.)
Thus for us $\p^n$ is a real algebraic variety whose real points
$\p^n(\r)$ can be identified with $\r\p^n$  and whose complex points
$\p^n(\c)$ can be identified with $\c\p^n$.

If $X\subset \p^n$ is a  quasi-projective real algebraic variety then
$X(\r)$ inherits from  $\r\p^n$ a (Euclidean) topology;
if $X$ is smooth, it inherits a differentiable structure.
In this article, we  always use this topology and differentiable structure.

For many purposes, the behavior of a real variety at its
complex points is not relevant, but in this paper it is crucial to
consider complex  points as well.
When we talk about a smooth, projective,  real  algebraic variety,
it is important that smoothness hold at all complex points
and $X(\c)$ be  compact.


We say that a real algebraic variety $X$ of dimension $n$ is
{\it rational} if it is birational to  $\p^n$; that is, the
birational map  is also defined over $\r$.
If such a birational map exists with complex coefficients,
we say that $X$ is {\it geometrically rational.}
\end{defn}

If $X$ is a  rational variety and $\dim X\geq 3$ then one can easily
perturb  the approximating maps $\p^1\to X$  produced by the
proof of \cite{bk3}
to obtain embeddings; see Proposition \ref{higher.dim.say}.1.
However, if $S$ is an algebraic surface,
 then usually there are very few embeddings
$\p^1\into S$; for instance only lines and conics for $S=\p^2$.

As a simple example, consider the parametrization of the nodal plane cubic curve
 $\bigl((x^2+y^2)z=x^3\bigr)$ given by
$f\colon (u,v)\longrightarrow \bigl(v(u^2+v^2),u(u^2+v^2),v^3\bigr)$.
Clearly $f(\r\p^1)$ is a simple closed curve in $\r\p^2$ but its Zariski
closure has an extra isolated real
point at $(0,0,1)$. One can remove this point either
by perturbing the equation to
$\bigl(z(x^2+y^2+\epsilon^2z^2)=x^3\bigr)$
or by  blowing up the point $(0,0,1)$. In the first case
  the curve becomes  elliptic, in the second case the
topology of the real surface changes.
In this paper we aim to  get rid of such extra real singular points.

By the above remarks,  the best one can hope for is to get approximation
by rational curves $C\subset S$ such that $C$ is smooth at its real points.
We call such curves {\it real-smooth.}
The main result is the following.

\begin{thm}\label{thm.main}
Let $S(\r)$ be the underlying topological surface of the real points of a
smooth rational surface and
$L\subset S(\r)$ a simple, connected, closed curve.
The following are equivalent.
\begin{enumerate}
\item  $L$ can be approximated by  real-smooth
 rational curves in the  $C^{\infty}$-topology.
\item There is a smooth rational surface  $S'$ and a  smooth
 rational curve $C'\subset S'$ such that  $(S,L)$ is diffeomorphic to
$(S'(\r), C'(\r))$.
\item  $(S(\r),L)$ is {\bf not} diffeomorphic to the pair
$(\mbox{torus}, \mbox{null homotopic curve})$.
\end{enumerate}
\end{thm}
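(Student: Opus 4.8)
The plan is to prove the cycle (1) $\Rightarrow$ (2) $\Rightarrow$ (3) $\Rightarrow$ (1), concentrating the real work in the last implication. For (1) $\Rightarrow$ (2) I would argue that a sufficiently good approximation already furnishes the model pair: if real-smooth rational curves $C_n$ have real loci converging to $L$ in the $C^{\infty}$-topology, then for $n \gg 0$ the locus $C_n(\r)$ is a simple closed curve $C^{\infty}$-close to $L$, hence ambient isotopic to $L$, since embeddings of $\s^1$ form an open set and $L$ is embedded and connected. Taking $S' = S$ and $C' = C_n$ then gives a diffeomorphism $(S(\r), L) \cong (S'(\r), C'(\r))$.

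For (2) $\Rightarrow$ (3) I would argue by contradiction, assuming $(S(\r),L) \cong (S'(\r), C'(\r))$ with $S'(\r)$ a torus and $C'(\r)$ null-homotopic. Then $C'(\r) = \s^1$ bounds an embedded disk $\Delta \subset S'(\r)$, while its complexification $C'(\c) = \sph$ is split by $C'(\r)$ into two conjugate hemispheres $D_{\pm}$. The plan is to cap one hemisphere with $\Delta$ to form a $2$-cycle $\Sigma = D_+ \cup \Delta$ in $S'(\c)$ and to compute $[\Sigma]^2$ and $K_{S'} \cdot [\Sigma]$ from adjunction $C'^2 + K_{S'} \cdot C' = -2$, using that a null-homotopic circle in a torus is two-sided with trivial normal bundle, so $C'^2$ is even. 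I expect the resulting numerical data to violate the Hodge index theorem, or the constraints on $(-1)$- and $(-2)$-curves of a rational surface; the vanishing Euler characteristic of the torus is precisely what removes the escape available on $\r\p^2$, where an even-degree conic may legitimately bound a disk. Extracting the exact contradiction is the delicate point of this implication.

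For (3) $\Rightarrow$ (1) I would first apply the approximation theorem of \cite{bk3} to the inclusion $L \hookrightarrow S$, viewed as a $C^{\infty}$ map $\s^1 \to S$, producing real algebraic maps $h \colon \p^1 \to S$ with $h|_{\r\p^1}$ arbitrarily $C^{\infty}$-close to $L$; for $h$ close enough this restriction is an embedding onto a curve isotopic to $L$. The Zariski closure $C = \overline{h(\p^1)}$ is then a rational curve that is automatically an immersion along $\r\p^1$, so the only failure of real-smoothness comes from \emph{isolated} real points, the images of conjugate pairs $\{p, \bar p\} \subset \p^1(\c) \setminus \r\p^1$ landing on a common real point, exactly as in the nodal-cubic example. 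The plan is to remove these one at a time by a real deformation of $h$ that splits each conjugate node while perturbing $h|_{\r\p^1}$ only $C^{\infty}$-negligibly, so that the limiting curve is real-smooth and still approximates $L$.

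The hard part will be this last implication. The obstruction to splitting the isolated real points is a parity/degree invariant of the pair $(S(\r), L)$, and the substance is to show that it vanishes exactly outside the forbidden torus/null-homotopic case. I expect the cleanest route to separate the combinatorics from the analysis: first realize every admissible diffeomorphism type $(S(\r), L)$ by an explicit model real-smooth rational curve on some rational surface, organized by a case analysis over the possible surfaces (sphere, torus, Klein bottle, and the non-orientable $N_k$) and over whether $L$ is one- or two-sided and separating or not; and then transport each model to an approximation on $S$ itself through the approximation theorem. Proving that no admissible type is missed, and that the torus/null-homotopic type genuinely has no model, is where the topological hypothesis (3) must be used in full.
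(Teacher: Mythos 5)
Your overall architecture is reasonable, but there are genuine gaps at the decisive steps. The most serious is in (3) $\Rightarrow$ (1): your plan to take the Bochnak--Kucharz approximating maps $h\colon\p^1\to S$ and then ``split each conjugate node by a real deformation'' so as to remove the isolated real points is exactly what the paper's nodal-cubic example warns cannot be done. Deforming the curve inside a fixed $S$ generically destroys rationality (the perturbed cubic becomes elliptic), and on some surfaces the obstruction is absolute: on a degree~$2$ Del Pezzo surface of Picard number~$1$ every real rational curve has odd arithmetic genus, hence a real singular point, so no deformation argument can succeed there. Your fallback --- realize each admissible topological type by a model pair $(S', C')$ and ``transport'' it to $S$ --- is in fact the paper's strategy (it proves (3) $\Rightarrow$ (2) $\Rightarrow$ (1) rather than (3) $\Rightarrow$ (1) directly), but the transport mechanism is not the approximation theorem for maps: it is the Biswas--Huisman theorem that two rational models of the same topological surface are linked by a birational map which is an isomorphism near the real loci, combined with the K\-oll\'ar--Mangolte theorem that such birational self-maps are dense in the diffeomorphism group of $S(\r)$. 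Without these two inputs you have no way to move a model curve onto $S$ near $L$. Moreover the realization of all types in your case analysis is the bulk of the actual work (it requires running the MMP on pairs $(S,(1-\epsilon)C)$, the table of Theorem~\ref{m.geq-2.thm}, and a bare-hands construction for the case where $S\setminus L$ is a M\"obius band plus a genus $\ge 2$ piece); your proposal only gestures at it.

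Two smaller but real problems. In (1) $\Rightarrow$ (2) you set $S'=S$ and $C'=C_n$, but (2) demands $C'$ smooth over $\c$, while (1) only supplies a real-smooth curve, which typically has many conjugate pairs of complex nodes; you must blow these up (which leaves $S(\r)$ unchanged) to produce the required smooth model. In (2) $\Rightarrow$ (3) your hemisphere-capping/Hodge-index scheme is speculative, and you concede you cannot extract the contradiction; the paper's argument is elementary: reduce to $\p^1\times\p^1$ via Biswas--Huisman, note that a null-homotopic $C(\r)$ forces the bidegree $(a_1,a_2)$ to be even, whence $p_a(C)=(a_1-1)(a_2-1)$ is odd, incompatible with $C$ being smooth rational (or even real-smooth rational, since an odd number of nodes cannot all pair off under conjugation).
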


As we noted, for a given $S$, our approximating curves
almost always have many singular points,
but they come in complex conjugate pairs. These singular points can be blown up
without changing the real part of $S$.
This shows that (\ref{thm.main}.1) $\Rightarrow$ (\ref{thm.main}.2)
and we explain later how
(\ref{thm.main}.2) $\Rightarrow$ (\ref{thm.main}.1)
can be  derived from the  results of \cite{bh, k-mang}.
The implication (\ref{thm.main}.2) $\Rightarrow$ (\ref{thm.main}.3)
turns out to be a straightforward genus computation in Proposition
\ref{torus.null-hom.lem}.
The main result is
(\ref{thm.main}.3) $\Rightarrow$ (\ref{thm.main}.2),
which is proved by enumerating all possible
topological pairs $(S(\r),L)$ and then exhibiting each
for a suitable rational surface, with one exception as in
(\ref{thm.main}.3).

In order to state a more precise version, we fix our topological
notation.

\begin{notation} Let $\s^1$  denote  the circle, $\sph$ the 2-sphere,
$\rp$ the real projective plane,
$\trs\sim \s^1\times \s^1$ the 2-torus and  $\kk$ the Klein bottle.

We also use some standard curves on these surfaces.
$\li\subset \sph$ denotes a circle and $\li\subset \rp$   a line.
We think of both $\trs$ and $\kk$  as an $\s^1$-bundle over $\s^1$.
Then $\li$ denotes  a section and  $\fib$ a fiber.
Note that $(\trs, \li)$ is diffeomorphic to $(\trs, \fib)$
but  $(\kk, \li)$ is not diffeomorphic to $(\kk, \fib)$.

Diffeomorphism of two surfaces $S_1, S_2$ is denoted by
 $S_1\sim S_2$.
Connected sum with $r$ copies of $\rp$ (resp.\ $\trs$)
is denoted by $\#r\rp$  (resp.\ $\#r\trs$).

Let $(S_1,L_1)$ be  a surface and a curve on it.
Its connected sum with a surface $S_2$ is denoted by
$(S_1,L_1)\# S_2$. Its underlying surface is $S_1\# S_2$.
We assume that the connected sum operation is disjoint from $L_1$;
then we get $L_1\subset S_1\# S_2$. This operation is well defined
if $S_1\setminus L_1$ is connected. If  $S_1\setminus L_1$ is disconnected,
then it matters to which side we attach $S_2$.
In the latter case we distinguish these by putting $\# S_2$
on the left or right of  $(S_1,L_1)$.
Thus
$$
 r_1\rp\#  (\sph, L)\# r_2\rp
$$
indicates that we attach $r_1$ copies of $\rp $
to one side of $\sph\setminus L$ and  $r_2$ copies of $\rp $
to the other side.

We  also need to take connected sums of the form
$(S_1,L_1)\#(S_2,L_2)$. From both surfaces we remove a disc
that intersects the curves $L_i$ in an interval; we can think of the
boundaries as $\s^1$ with 2  marked points  $(\s^1, \pm 1)$.
Then we glue so as to get a simple closed curve on  $S_1\# S_2$.
In general there are 4 ways of doing this, corresponding to the
4 isotopy classes of self-diffeomorphisms of $(\s^1, \pm 1)$. However, when
one of the pairs is $(\rp, \li)=\bigl(\rp, (x=0)\bigr)$
and we remove the disc  $(x^2+y^2\leq z^2)$,
then the automorphisms  $(x{:}y{:}z)\mapsto (\pm x{:}\pm y{:}z)$
represent all 4 isotopy classes, hence
the end result $(S_1,L_1)\#(\rp, \li)$ is unique.
This is the only case that we use.

\end{notation}

\begin{defn}[Intersection numbers]
 The intersection number of two algebraic  curves $\bigl(C_1\cdot C_2\bigr)$
on a smooth, projective surface
is the intersection number of the underlying complex curves.
The intersection number of the real parts
$\bigl(C_1(\r)\cdot C_2(\r)\bigr)$ is only defined modulo 2 and
$$
\bigl(C_1(\r)\cdot C_2(\r)\bigr)\equiv \bigl(C_1\cdot C_2\bigr) \mod 2.
$$
In particular, if $C\subset S$ is a rational curve such that
$C$ is smooth at its real points, then
$S(\r)$ is orientable along $C(\r)$  $\Leftrightarrow$
$\bigl(C(\r)\cdot C(\r)\bigr)\equiv 0 \mod 2$  $\Leftrightarrow$
$\bigl(C\cdot C\bigr)$ is even.
\end{defn}

The following result  lists the
topological pairs $\bigl(S(\r), C(\r)\bigr)$,
depending  on the complex
self-intersection of the rational curve.
In the table below we ignore the trivial cases when
$C(\r)=\emptyset$. We see in (\ref{mmp.steps}.2) that
  every topological type that occurs
 for $(C^2)=e+2$ also
occurs for $(C^2)=e$;  thus, for clarity,  line $e$ lists only  those types
that do not appear for $e+2,\dots, e+8$. We call these the
{\it new topological types.}

\begin{thm} \label{m.geq-2.thm}
Let $S$ be a smooth, projective surface defined and rational
over $\r$ and $C\subset S$  a rational curve that is smooth
(even over $\c$).
For $(C^2)\geq -2$ the following table lists the possible
topological types of the pair $\bigl(S(\r), C(\r)\bigr)$.
$$
\begin{array}{cl}
(C^2) & \mbox{new topological types}\\
\mbox{even}\geq 6 & (\trs, \li)\# r\rp:\quad r=0,1,\dots\\
\mbox{odd}\geq 5 & (\kk, \li)\# r\rp:\quad r=0,1,\dots\\
4 & r_1\rp\#  (\sph, \li)\# r_2\rp:\quad r_1+r_2\geq 1\\
3 & \mbox{nothing new}\\
2 & (\sph, \li)\\
1 & (\rp, \li)\\
0 & (\kk, \fib)\\
-1 & (\rp, \li)\# \trs\\
-2 &  (\kk, \fib)\# \trs.
\end{array}
$$
\end{thm}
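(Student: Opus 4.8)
The plan is to prove the two inclusions separately: first that each pair in the table is realized by some $(S,C)$, and then that no other topological type can occur. Throughout I use two basic facts. Since $C$ is smooth and rational with $C(\r)\neq\emptyset$ it is isomorphic to $\p^1$ over $\r$, so $C(\r)\cong \s^1$; and by the intersection-number remark preceding the theorem a tubular neighborhood of $C(\r)$ in $S(\r)$ is an annulus when $(C^2)$ is even and a M\"obius band when $(C^2)$ is odd. Adjunction gives $\bigl(K_S\cdot C\bigr)=-(C^2)-2$, so for $(C^2)\geq -1$ the curve $C$ is $K_S$-negative. Finally I will lean on the reduction (\ref{mmp.steps}.2): blowing up a conjugate pair of complex points lying on $C$ produces a new smooth rational curve $C'$ with $((C')^2)=(C^2)-2$, leaves $S(\r)$ and $C(\r)$ unchanged, and hence preserves the topological pair. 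This is exactly why the table is organized by ``new types,'' and why it suffices to produce, for each $(C^2)=d$, only the types not already inherited from $d+2$.

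For realizability I would exhibit base models and then propagate them by elementary real blow-ups. The base models are the obvious ones: a line in $\p^2$ gives $(\rp,\li)$ at $d=1$; a smooth $(1,1)$-curve on the sphere-form quadric gives $(\sph,\li)$ at $d=2$; a section of a Hirzebruch surface $\fib_n$ gives $(\trs,\li)$ for $n$ even (so $(C^2)$ even) and $(\kk,\li)$ for $n$ odd (so $(C^2)$ odd); and a fiber of a conic bundle with Klein-bottle real locus gives $(\kk,\fib)$ at $d=0$. The $\#\trs$ summands appearing at $d=-1,-2$ come from the torus-type minimal models. Three elementary operations then generate the rest of each row: blowing up a real point off $C$ replaces the pair by its connected sum with $\rp$ and leaves $(C^2)$ fixed (this produces the $\#r\rp$ terms); blowing up a real point on $C$ forms the connected sum $\#(\rp,\li)$ defined in the Notation section and lowers $(C^2)$ by $1$; and blowing up a conjugate pair on $C$ lowers $(C^2)$ by $2$ with no topological effect. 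Chaining these from the base models realizes every entry.

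The completeness direction is where the real work lies, and here I would run the real minimal model program on $S$ while keeping track of $C$. Each step contracts either a real $(-1)$-curve $E$ or a conjugate pair $E\cup\bar E$ of disjoint $(-1)$-curves. If the contracted locus is disjoint from $C$ it is the inverse of one of the operations above, so $S(\r)$ loses an $\rp$ summand, or is unchanged, and $(C^2)$ is unchanged; if $E$ is real and meets $C$ transversally in a single point, then the image $\bar C$ is again smooth rational with $(\bar C^2)=(C^2)+1$ and the pair loses a $\#(\rp,\li)$ summand. Iterating, I reduce $(S,C)$ to a \emph{minimal pair} in which no further admissible contraction exists, and I would classify these directly: $S$ is then one of the minimal real rational surfaces (projective plane, the sphere- and torus-type quadrics, the Hirzebruch surfaces, minimal conic bundles, and minimal del Pezzo surfaces) with $C$ in a restricted position, and a finite case check against the classification of real loci of rational surfaces shows that $\bigl(S(\r),C(\r)\bigr)$ is exactly one of the base types. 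Inverting the MMP steps then places the original pair in the claimed row, and the stabilization of the list by $(C^2)=d+8$ follows because the minimal sources feeding a fixed parity saturate after boundedly many blow-ups.

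The main obstacle is precisely this classification of minimal pairs together with the MMP bookkeeping. Two points require care. First, a $(-1)$-curve may meet $C$ in several points or tangentially, so that contracting it would make $C$ singular; I must show that in any non-minimal configuration there is always a contraction keeping $C$ smooth, or else absorb the bad configurations into the minimal-pair analysis. Second, the connected-sum and orientability bookkeeping must be carried out with the precise conventions of the Notation section, since the theorem distinguishes pairs with the same underlying surface --- for instance $(\kk,\li)$ from $(\kk,\fib)$, and the two sides of $(\sph,\li)$ to which the $\rp$ summands are attached. Matching these pair-level invariants, rather than merely the diffeomorphism type of $S(\r)$, is the delicate part of the argument; the case $(C^2)=3$ contributing nothing new, and the exact thresholds $6$ (even) and $5$ (odd), should emerge from this analysis.
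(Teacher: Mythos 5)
Your overall strategy --- run an MMP keeping track of $C$, classify the terminal pairs, and invert the steps --- is the paper's strategy, but two of the points you defer are exactly where the content lies, and as written your plan does not close them. First, you propose to run the ordinary real $K_S$-MMP and then worry afterwards about $(-1)$-curves meeting $C$ in several points or tangentially. The paper resolves this by running the $\bigl(K_S+(1-\epsilon)C\bigr)$-MMP instead: if $E$ is a $(-1)$-curve with $(E\cdot C)\geq 2$ then $\bigl(E\cdot(K_S+(1-\epsilon)C)\bigr)\geq 1-2\epsilon>0$, so such curves are never extremal for the log divisor and the image of $C$ automatically stays smooth at every elementary step. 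Without this (or an equivalent device) your reduction to minimal pairs is not justified, since you have not shown that a contraction preserving the smoothness of $C$ always exists.

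Second, and more seriously, the log MMP can terminate at a \emph{classically minimal} pair whose only remaining extremal contraction contracts $C$ itself, which happens precisely when $(C^2)<0$; in that regime your assertion that the minimal pair lives on a minimal rational surface (plane, quadrics, Hirzebruch surfaces, minimal conic bundles, minimal del Pezzos) is false --- for instance a weak del Pezzo surface of Picard number $2$ carrying a $(-2)$-curve is classically minimal as a pair but is not a minimal surface. The paper must treat $(C_m^2)=-1$ separately (contract $C_m$ and invoke Comessatti's theorem, which is where $(\rp,\li)\#\trs$ comes from) and devote Lemma \ref{-2.case.deg2.dp} to $(C_m^2)=-2$: contracting $C_m$ yields a singular del Pezzo surface of Picard number $1$, and ruling out all cases except the $\p^1$-bundle one requires the Segre--Manin non-rationality theorem for degree~$2$ del Pezzo surfaces and minimal conic bundles. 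Your attribution of the $d=-2$ entry to ``torus-type minimal models'' is also off: the new type $(\kk,\fib)\#\trs$ arises as $(\sph,\li)\#4(\rp,\li)$, i.e.\ from the $(C_m^2)=2$ terminal model with four real blow-ups on the curve, while the whole point of Lemma \ref{-2.case.deg2.dp} is that the genuinely classically minimal $(C^2)=-2$ pairs on rational surfaces contribute only $(\trs,\li)$, which is already in the table. Until these negative self-intersection terminal cases are classified, the completeness half of your argument is open.
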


Thus, as an example,   the possible
topological types of the pair $\bigl(S(\r), C(\r)\bigr)$
where $(C^2)=0$ are given by the entries corresponding to
the values $(C^2)=0,2,4$  and $\mbox{\it even}\geq 6$.

As we see in Section \ref{sec.mmp},  the entries for $e\geq -1$ follow
from an application of the minimal model program
to the pair $\bigl(S,(1-\epsilon)C\bigr)$. Nothing unexpected happens for
$e=-2$ but this depends on some rather delicate properties
of singular Del Pezzo surfaces;
see Lemma \ref{-2.case.deg2.dp}.

By contrast, we know very little about the cases $e\leq -3$.
These lead to the study of rational surfaces with quotient singularities
and ample canonical class. There are many such cases -- see
\cite[Sec.5]{k-bmy} or Example \ref{ample.K.exmp} --
but very few definitive results
\cite{MR2824965, MR2888175, MR2805643,MR2786664}.

The pairs $(S,L)$ listed in Theorem \ref{m.geq-2.thm}
and the pairs easily derivable from them give almost all
examples needed to prove (\ref{thm.main}.3) $\Rightarrow$ (\ref{thm.main}.2).
The only exceptions are pairs  $(S,L)$
where $S\setminus L$ is the disjoint union of a M\"obius strip
and of an orientable surface of genus $\geq 2$.
These are constructed by hand in Example \ref{final.exmp.series}.

We use the following  basic result on
the topology of real algebraic surfaces, due to \cite{Co14}.

\begin{thm}[Comessatti's theorem]\label{Co14.thm}
 Let $S$ be a projective, smooth real algebraic surface
that is birational to $\p^2$. Then $S(\r)$ is either
$\sph, \trs$ or $\#r\rp$ for some $r\geq 1$.\qed
\end{thm}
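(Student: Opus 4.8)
The plan is to reduce to minimal models via the minimal model program over $\r$ and then read off the diffeomorphism type of the real locus from the short list of $\r$-minimal rational surfaces. First I would record two elementary topological facts about blow-ups of a smooth real surface $S$. Blowing up a single real point $p\in S(\r)$ replaces a disc neighbourhood of $p$ by a M\"obius band, so the resulting real locus is $S(\r)\,\#\,\rp$; blowing up a pair of complex conjugate non-real points leaves $S(\r)$ unchanged up to diffeomorphism, since the exceptional locus is disjoint from the real points. Dually, contracting a real $(-1)$-curve removes a crosscap, and contracting a conjugate pair of disjoint $(-1)$-curves does nothing to $S(\r)$. Note that blowing up a point of a fixed connected component performs the connected sum \emph{within} that component.

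Next I would run the MMP over $\r$ on $S$. Since $S$ is rational, every divisorial step contracts either a real $(-1)$-curve or a conjugate pair of disjoint $(-1)$-curves, staying within the class of smooth projective $\r$-rational surfaces, until we reach an $\r$-minimal model $S_0$ (a minimal surface of Picard number one over $\r$, or a minimal conic bundle). By the blow-up facts above, $S(\r)\sim S_0(\r)\,\#\,k\rp$ for some $k\geq 0$, and $S(\r)$ is connected if and only if $S_0(\r)$ is.

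The crux is the classification of $\r$-minimal rational surfaces (Comessatti, and in modern form Manin--Iskovskikh): $S_0$ is $\p^2$, a minimal real quadric (an $\r$-form of $\p^1\times\p^1$), a Hirzebruch surface $\f_n$, a minimal conic bundle over $\p^1$, or a Del Pezzo surface of Picard rank one over $\r$. I would read off the \emph{connected} nonempty real loci case by case: $\p^2(\r)=\rp$; the split quadric gives $\trs$ and the other nonempty quadric forms give $\sph$; $\f_n(\r)\sim\trs$ for $n$ even and $\f_n(\r)\sim\kk$ for $n$ odd; and the connected conic-bundle and Del Pezzo cases give $\sph$, $\trs$, or $\kk$. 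The disjoint-unions-of-spheres that occur for some conic bundles are excluded precisely because a disconnected real locus is incompatible with $\r$-rationality. Hence the connected $\r$-minimal real loci are exactly $\sph$, $\trs$, $\kk=\#2\rp$, and $\rp$.

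Finally I would combine, using the surface-classification identities $\rp\#\rp=\#2\rp$ and $\trs\#\rp\sim\kk\#\rp\sim\#3\rp$. From $\sph$ we get $\sph$ (if $k=0$) or $\#k\rp$ (if $k\geq 1$); from $\rp$ we get $\#(k+1)\rp$; from $\kk$ we get $\#(k+2)\rp$; and from $\trs$ we get $\trs$ (if $k=0$) or $\#(k+2)\rp$ (if $k\geq 1$). In every case $S(\r)$ is $\sph$, $\trs$, or $\#r\rp$ with $r\geq 1$. The main obstacle is the classification step together with the orientability bookkeeping: one must know that no $\r$-minimal rational surface carries an orientable real locus of genus $\geq 2$, equivalently that the only orientable loci produced are $\sph$ and $\trs$. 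This is exactly where the finiteness of the minimal-model list, and the connectedness forced by genuine $\r$-rationality (as opposed to mere geometric rationality), do the real work.
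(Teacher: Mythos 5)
The paper does not prove this statement at all: it is quoted as Comessatti's theorem with a \qed and a citation to the 1914 paper \cite{Co14}, so there is no internal argument to compare yours against. Your proposal is the standard modern proof and is essentially sound; it is also very much in the spirit of the toolkit the paper deploys elsewhere (the bookkeeping of how real and conjugate-pair blow-ups change $S(\r)$ is exactly steps (\ref{mmp.steps}.1--4), and the list of terminal outcomes mirrors (\ref{mmp.steps}.6--10)). What your route buys is a self-contained reduction: blow-downs of real $(-1)$-curves remove crosscaps, conjugate pairs do nothing, so $S(\r)\sim S_0(\r)\#k\rp$ with $S_0$ an $\r$-minimal model, and then Dyck's relations $\trs\#\rp\sim\kk\#\rp\sim\#3\rp$ finish the combination. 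The cost is that all the genuine content is pushed into two cited inputs: (i) the Manin--Iskovskikh classification of $\r$-minimal rational surfaces and the case-by-case determination of their real loci, and (ii) the fact that the number of connected components of $S(\r)$ is a birational invariant of smooth projective real surfaces (which you should justify, e.g.\ by factoring any birational map through point blow-ups). Two small points of care: your sentence asserting that the connected Del Pezzo and conic-bundle cases give only $\sph$, $\trs$ or $\kk$ is not quite right --- a rank-one degree-one Del Pezzo with connected real locus gives $\rp$ --- but your final list does include $\rp$, so the conclusion is unaffected; and the exclusion of minimal conic bundles and low-degree Del Pezzo surfaces that are merely geometrically rational (via disconnectedness or the Segre--Manin non-rationality results the paper itself invokes in Lemma \ref{-2.case.deg2.dp}) is the step where ``rational over $\r$'' rather than ``geometrically rational'' genuinely enters, as you correctly flag.
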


\begin{defn}\label{epproximate.defn}
For a differentiable manifold $M$, let $C^{\infty}(\s^1,M)$ denote the space
of all $C^{\infty}$ maps
 of $\s^1$ to $M$, endowed with the $C^{\infty}$-topology.

Let $X$ be a smooth real algebraic variety and
$C\subset X$ a  rational curve.
By choosing any isomorphism of its normalization $\bar C$ with the
plane conic  $(x^2+y^2=z^2)\subset \p^2$, we get a
$C^{\infty}$ map  $\s^1\to X(\r)$ whose image coincides with
$C(\r)$, aside from its isolated real singular points.

Let $\sigma: L\hookrightarrow X(\r)$ be an embedded circle.
We say that $L$ can be  \emph{approximated by}
 rational   curves of a certain kind if every
 neighborhood of $\sigma$ in $C^{\infty}\bigl(\s^1,X(\r)\bigr)$
contains a map derived as above from  a curve of that kind.
\end{defn}

\begin{ack} We thank V.~Kharlamov for useful
conversations and the referee for many very helpful
corrections and suggestions.
Partial financial support  for JK was provided by  the NSF under grant number
 DMS-07-58275.
The research of FM was partially supported by ANR Grant ``BirPol''
 ANR-11-JS01-004-01.
\end{ack}

\section{Minimal models for pairs}\label{sec.mmp}

 Let
${\mathcal S}$ be a class of smooth, projective surfaces defined
over $\r$ that is closed under birational equivalence.
We would like to understand the possible topological types
$\bigl(S(\r), C(\r)\bigr)$ where $S\in {\mathcal S}$
and $C\subset S$ is a smooth, rational curve.

We are mostly interested in the cases when ${\mathcal S}$
consists of rational or geometrically  rational surfaces.
It is not a priori obvious, but the answer turns out to have an interesting
dependence on the self-intersection number $e:=(C^2)$.

Our approach is to run the
$\bigl(S, K_S+(1-\epsilon)C\bigr)$-minimal model program
(abbreviated as MMP) for  $0<\epsilon\ll 1$;
see (\ref{mmp.steps}.2) why the $-\epsilon $ is needed.
 (For a general introduction to MMP
over any field, see  \cite[Sec.1.4]{km-book}.
The real case  is discussed for smooth surfaces in   \cite[Sec.2]{k-top}
and for  surfaces with Du~Val singularities in
\cite[Sec.2]{MR1760882}.)
Then we need to understand how the topology of $\bigl(S(\r), C(\r)\bigr)$
changes with the steps of  the program and describe the possible
last steps. At the end we try to work backwards to get our final answer.

Since $\bigl(C\cdot (K_S+C)\bigr)=-2$, the divisor $K_S+(1-\epsilon)C$
has negative intersection number with $C$ for  $0<\epsilon\ll 1$,
so the minimal model program always produces a nontrivial contraction
$\pi:S\to S_1$. If $\pi$ is birational and $C$ is not $\pi$-exceptional,
set $C_1:=\pi(C)\subset S_1$.

Note that if $E\subset S$ is an irreducible  curve such that
$\bigl(K_S+(1-\epsilon)C\bigr)\cdot E<0$ then $(K_S\cdot E)<0$,
except possibly when
$E=C$ and $(C^2)<0$. Thus -- aside from the latter case which we discuss in
(\ref{mmp.steps}.5) -- all steps of the
$\bigl(K_S+(1-\epsilon)C\bigr) $-MMP are also steps on the traditional MMP.

\begin{say}[List of the possible steps of the MMP]\label{mmp.steps}{  \ }

In what follows, we ignore the few cases where  $S(\r)=\emptyset$
since these are not relevant for us.

\medskip
{\it Elementary birational contractions.} Here $S_1$ is a smooth surface
and $\pi:S\to S_1$ is obtained by blowing up a real point or a
conjugate pair of complex points. There are 4 cases.
\medskip

(\ref{mmp.steps}.1) $\pi$ contracts a conjugate pair of
disjoint $(-1)$-curves
that are disjoint from $C$.
Then
$$
\bigl(S(\r), C(\r)\bigr)\sim \bigl(S_1(\r), C_1(\r)\bigr)
\qtq{and}(C^2)=(C_1^2).
$$

(\ref{mmp.steps}.2) $\pi$ contracts a conjugate pair of disjoint  $(-1)$-curves
that intersect $C$ with multiplicity 1 each.
(Note that  $\bigl(E\cdot \bigl(K_S+(1-\epsilon)C\bigr)\bigr)=-\epsilon<0$;
this is why we needed the $-\epsilon $ perturbation term.)
Then
$$
\bigl(S(\r), C(\r)\bigr)\sim\bigl(S_1(\r), C_1(\r)\bigr)\qtq{and}
(C^2)=(C_1^2)-2.
$$
 The inverse shows that  every
topological type that occurs for $(C^2)=e$ also
occurs for $(C^2)=e-2$.

(\ref{mmp.steps}.3) $\pi$ contracts a real $(-1)$-curve
that is disjoint from $C$.
Then
$$
\bigl(S(\r), C(\r)\bigr)\sim\bigl(S_1(\r), C_1(\r)\bigr)\#\rp\qtq{and}
(C_1^2)=(C^2).
$$
 The inverse shows that  for every
topological type that occurs, its connected sum with $\rp$
also occurs.

(\ref{mmp.steps}.4) $\pi$ contracts a real $(-1)$-curve
that intersects $C$ with multiplicity 1.
Then
$$
\bigl(S(\r), C(\r)\bigr)\sim\bigl(S_1(\r), C_1(\r)\bigr)\#(\rp,\li)\qtq{and}
(C^2)=(C_1^2)-1.
$$
\medskip

With these birational contractions,
$\bigl(S_1, C_1\bigr) $ is again a pair in our class
$\mathcal S$
and we can continue running the minimal model program
to get
$$
(S, C)=(S_0, C_0)\stackrel{\pi_0}{\to}(S_1, C_1)\stackrel{\pi_1}{\to}
\cdots \stackrel{\pi_{m-1} }{\to}(S_{m}, C_{m})
$$
until no such contractions are possible. We call such pairs
$(S_{m}, C_{m}) $ {\it classically minimal.}
Note also that  in any sequence of these steps,
the value of $(C_i^2)$ is non-decreasing.

\medskip
{\it Singular birational contraction.}
\medskip

(\ref{mmp.steps}.5) $\pi$ contracts $C$ to a point.
This can happen only if $(C^2)<0$. If $(C^2)\leq -2$, the resulting
$S_1$ is singular. For $(C^2)\leq -3$
these are very difficult cases and we try to avoid
them if possible.

\medskip
{\it Non-birational contractions.}
\medskip

(\ref{mmp.steps}.6) $\pi:S\to \p^1$ is a $\p^1$-bundle and
$C$ is a section. In this case
$$
\bigl(S(\r), C(\r)\bigr)\sim (\trs, \li)\qtq{or} (\kk, \li)
\qtq{with $(C^2)$ arbitrary.}
 $$
(More precisely, $(C^2)$ is even for $\trs$ and odd for $\kk$.)

(\ref{mmp.steps}.7) $\pi$ maps $S=\p^2$ to a point and $C$ is a conic. Thus
$$
\bigl(S(\r), C(\r)\bigr)\sim  (\sph, \li)\#\rp\qtq{and} (C^2)=4.
$$

(\ref{mmp.steps}.8)  $\pi$ maps $S=(x^2+y^2+z^2=t^2)\subset \p^3 $ to a point  and $C=(x=0)$
 is a plane section. Thus
$$
\bigl(S(\r), C(\r)\bigr)\sim (\sph, \li)\qtq{and} (C^2)=2.
$$
(Note that  the hyperboloid
$T:=(x^2+y^2=z^2+t^2)\subset \p^3 $ is isomorphic to
$\p^1\times \p^1$, and the corresponding step of the MMP is
either one of the coordinate projections.
This is listed under (\ref{mmp.steps}.6).)

(\ref{mmp.steps}.9)  $\pi$ maps $S=\p^2$ to a point  and $C$ is a line. Thus
$$
\bigl(S(\r), C(\r)\bigr)\sim (\rp, \li)\qtq{and} (C^2)=1.
$$

(\ref{mmp.steps}.10) $\pi:S\to \p^1$ is a conic bundle and
$C$ is a  smooth fiber. If $S$ is rational then we have
three possibilities
$$
\bigl(S(\r), C(\r)\bigr)\sim (\trs, \fib), (\kk, \fib) \qtq{or} (\sph, \li)
\qtq{ and $(C^2)=0$.}
$$

(Note: If $S$ is geometrically rational but not necessarily rational,
then Steps \ref{mmp.steps}.1--9  are unchanged, but in Step \ref{mmp.steps}.10
 we can also have
the disjoint union of $(\sph, \li) $ with  copies of $\sph$.)
\end{say}

Putting these together, we get the following.

\begin{cor}\label{mmp.top.backward.cor}
  Let $S$ be a smooth, projective, rational  surface defined
over $\r$ and $C\subset S$  a smooth, rational curve.
Run the $\bigl(K_S+(1-\epsilon)C\bigr)$-MMP to get
$$
(S, C)=(S_0, C_0)\stackrel{\pi_0}{\to}(S_1, C_1)\stackrel{\pi_1}{\to}
\cdots \stackrel{\pi_{m-1} }{\to}(S_{m}, C_{m})
\stackrel{\tau }{\to} T.
$$
Assume that the $\pi_i$ are elementary contractions as in
(\ref{mmp.steps}.1--4) and $\tau$ is a non-birational contraction
as in (\ref{mmp.steps}.6--10).

Then $S_m, C_m$ are smooth and
$\bigl(S(\r), C(\r)\bigr)$ can be described as follows.
\begin{enumerate}
\item  If $S(\r)\setminus C(\r) $ is connected then
$$
\bigl(S(\r), C(\r)\bigr)\sim
 \bigl(S_m(\r), C_m(\r)\bigr) \# r_1(\rp, \li)\# r_2\rp
$$
for some $r_1, r_2\geq 0$. Here $(C^2)\leq (C_m^2)-r_1$,
with strict inequality if we ever perform Step \ref{mmp.steps}.2.
\item If $S(\r)\setminus C(\r) $ is disconnected then
 $\bigl(S_m(\r), C_m(\r)\bigr)$ is $(\sph, \li)$
or $(\sph, \li)\#\rp$  and all  real exceptional curves
of $S\to S_m$ are disjoint from $C(\r)$.
In this case $C_m(\r)$ separates $S_m(\r)$ and in taking connected sums
we need to keep track on which side we blow up. Thus
$$
\bigl(S(\r), C(\r)\bigr)\sim
r_1\rp\# \bigl(S_m(\r), C_m(\r)\bigr)\# r_2\rp
$$
for some $r_1, r_2\geq 0$. As before,  $(C^2)\leq (C_m^2)$. \qed
\end{enumerate}
\end{cor}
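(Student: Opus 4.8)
The plan is to assemble the per-step analysis of (\ref{mmp.steps}.1--4) into a statement about the whole sequence, and then use a single mod-$2$ homological invariant to separate the two cases. Since each elementary step is already understood, the work is pure bookkeeping: composing the diffeomorphisms, recording which connected-sum summands are glued along $C$ and which off $C$, and tracking $(C^2)$.

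First I would compose the four elementary contractions; each blows down a smooth $(-1)$-curve (real, or a conjugate complex pair) on a smooth surface, so all $S_i$ and their smooth proper transforms $C_i$ stay smooth, giving in particular the asserted smoothness of $S_m$ and $C_m$. Steps (\ref{mmp.steps}.1) and (\ref{mmp.steps}.3) contract curves disjoint from $C$, so they leave $C_i(\r)$ untouched and alter $S_i(\r)$ only by a summand $\#\rp$ (none in (\ref{mmp.steps}.1), one in (\ref{mmp.steps}.3)) inserted into $S_i(\r)\setminus C_i(\r)$; Steps (\ref{mmp.steps}.2) and (\ref{mmp.steps}.4) contract curves meeting $C$ once, attaching respectively nothing or a copy of $(\rp,\li)$ \emph{across} $C_i(\r)$. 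Collecting over the sequence, $\bigl(S(\r),C(\r)\bigr)$ is $\bigl(S_m(\r),C_m(\r)\bigr)$ with $r_1$ copies of $(\rp,\li)$ glued along $C$ (one per Step \ref{mmp.steps}.4) and $r_2$ copies of $\rp$ glued off $C$ (one per Step \ref{mmp.steps}.3). By the convention in the Notation, the $\#(\rp,\li)$ summands are attached along $C$ and so are unambiguous, whereas each $\#\rp$ is placed in a component of the complement and needs a side only when that complement is disconnected.

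The decisive point is to decide, via the separation behaviour of $C_i(\r)$, when side information is actually needed, and here the blow-up relation does the work. If $S_i$ is the blow-up of $S_{i+1}$ at a real point $p\in C_{i+1}(\r)$ as in Step (\ref{mmp.steps}.4), then $S_i(\r)=S_{i+1}(\r)\#\rp$ and the proper transform satisfies
$$
[C_i(\r)]=[C_{i+1}(\r)]+[E_i(\r)]\qquad\text{in }H_1\bigl(S_i(\r);\z/2\bigr),
$$
where $[E_i(\r)]$ generates the new $\z/2$ summand coming from the crosscap. As this class lies outside the image of $H_1\bigl(S_{i+1}(\r);\z/2\bigr)$, the sum is nonzero, so a Step (\ref{mmp.steps}.4) always yields a \emph{non-separating} $C_i(\r)$. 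By contrast Steps (\ref{mmp.steps}.1--3) give diffeomorphic pairs or add a crosscap disjoint from $C$, hence preserve the class $[C_i(\r)]$ and the separation behaviour. Therefore $C(\r)$ separates $S(\r)$ \emph{iff} no Step (\ref{mmp.steps}.4) occurs and $C_m(\r)$ separates $S_m(\r)$.

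This dichotomy delivers both cases. When $S(\r)\setminus C(\r)$ is disconnected no Step (\ref{mmp.steps}.4) is performed, so every real exceptional curve of $S\to S_m$ is disjoint from $C(\r)$ and $C_m(\r)$ separates $S_m(\r)$; inspecting (\ref{mmp.steps}.6--10), the only classically minimal pairs with separating curve are $(\sph,\li)$ and $(\sph,\li)\#\rp$, and each Step (\ref{mmp.steps}.3) then inserts its $\rp$ into a definite one of the two sides, giving $r_1\rp\#\bigl(S_m(\r),C_m(\r)\bigr)\# r_2\rp$. When $S(\r)\setminus C(\r)$ is connected, either $C_m(\r)$ is already non-separating and every $\#\rp$ sits in a connected complement throughout, or $C_m(\r)$ separates and some $\#(\rp,\li)$ must be present to break separation; in the latter case that summand joins the two sides, so after reordering the commuting connected sums all the $\#\rp$ become side-independent, yielding the well-defined $\bigl(S_m(\r),C_m(\r)\bigr)\# r_1(\rp,\li)\# r_2\rp$. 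Finally $(C^2)$ is fixed by Steps (\ref{mmp.steps}.1) and (\ref{mmp.steps}.3), drops by $2$ at each Step (\ref{mmp.steps}.2) and by $1$ at each Step (\ref{mmp.steps}.4), so $(C^2)=(C_m^2)-r_1-2s$ with $s$ the number of Step (\ref{mmp.steps}.2)'s; this gives $(C^2)\le(C_m^2)-r_1$, strict exactly when some Step (\ref{mmp.steps}.2) occurs, and reduces to $(C^2)\le(C_m^2)$ in the disconnected case where $r_1=0$. The one genuinely delicate point is this side-bookkeeping for the $\#\rp$ summands, which the homological identity above is precisely designed to pin down; the surrounding reordering of connected sums of pairs is routine but must be handled with care.
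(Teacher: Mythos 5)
Your proposal is correct and follows essentially the same route as the paper, which states the corollary with only a \qed because it is exactly this assembly of the per-step analysis in (\ref{mmp.steps}.1--4). Your explicit $\z/2$-homology argument that a Step (\ref{mmp.steps}.4) forces $C_i(\r)$ to become non-separating (while Steps 1--3 preserve the class of $C_i(\r)$) is a clean justification of the separation dichotomy that the paper leaves implicit, and the bookkeeping of $(C^2)=(C_m^2)-r_1-2s$ matches the stated inequalities.
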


It remains to understand what happens if the MMP ends with a
singular birational contraction. We start with the  simplest,
 $(C_m^2)=-1$ case; here the adjective ``singular'' is not warranted.

\begin{say}[Case $(C_m^2)=-1$] \label{-1.case.say}
Let
$C_m\subset S_m$ be a $(-1)$-curve and
 $\tau:S_m\to S_{m+1}$ its contraction. Then
 $S_{m+1}$ is again a surface in
${\mathcal S}$.  Thus
$$
\bigl(S_m(\r), C_m(\r)\bigr)\sim (\rp, \li)\# S_{m+1}(\r).
$$
If $S$ is a rational surface then so is $S_{m+1}$
thus, by Theorem~\ref{Co14.thm}, we have only the cases
$$
\bigl(S_m(\r), C_m(\r)\bigr)\sim (\rp, \li)\#r\rp\qtq{or} (\rp, \li)\#\trs.
\eqno{(\ref{-1.case.say}.1)}
$$
We can then obtain $\bigl(S(\r), C(\r)\bigr) $
from $\bigl(S_m(\r), C_m(\r)\bigr) $ as in
Corollary \ref{mmp.top.backward.cor}.1.
\end{say}


\begin{rem}
Although we do not need it, note that if
$(S_m, C_m)$ is classically minimal then  every
$(-1)$-curve on $S_{m+1}$ passes through $\tau(C_m)$.
The latter condition is not sufficient to ensure
that $S_m$ be classically minimal, but it is easy to write down
series of examples.

Start with $P_0=\p^2$, a line $L\subset P_0$ and a point $p\in L$.
Blow up $p$ repeatedly to obtain $P_r$ with $C_r\subset P_r$
the last exceptional curve. We claim that
$C_r$ is the only $(-1)$-curve on $P_r$ for $r\geq 3$, thus $(P_r, C_r)$ is
classically minimal.

We can fix coordinates on $P_0$ such that $L=(y=0)$ and $p=(0:0:1)$.
Then the $(\c^*)^2$-action
$(x:y:z)\mapsto (\lambda x: \mu y: z)$ lifts to $P_r$,
hence the only possible curves with negative self-intersection on $P_r$
are the preimages of the coordinate axes and the exceptional curves of
$P_r\to P_0$. These are easy to compute explicitly.
Their dual graph is a cycle of rational curves
$$
  \xymatrix{
    (-2)\ar@{-}[d]\ar@{-}[r]& (-2)\ar@{--}[r] &
(-2) \ar@{-}[r]  & (-2)\ar@{-}[d]  \\
    (-1) \ar@{-}[r] & (1-r) \ar@{-}[r] & (1) \ar@{-}[r] & (0)
 }
  $$
where  $(a)$ denotes a curve of self-intersection $a$,
each curve intersects only the two neighbors connected to it by
a solid line and
there are $r-1$ curves with self-intersection $-2$ in the top row.
Thus
$C_r$ is the only $(-1)$-curve  for $r\geq 3$.

There are probably many more series of such surfaces.
\end{rem}

Next we study singular birational contractions where $(C_m^2)=-2$.
To simplify notation, we drop the subscript $m$.
The result and the proof remain the same over
an arbitrary field of characteristic 0. In this setting, a
pair $(S,C)$ is
{\it classically minimal} if there is no birational contraction
that is extremal both for $K_S$ and for $K_S+(1-\epsilon)C$.

\begin{lem} \label{-2.case.deg2.dp}
Let $S$ be a smooth,  geometrically rational surface
(over an arbitrary field $k$ of characteristic 0) and $C\subset S$
a smooth, geometrically rational curve. Assume that the pair $(S,C)$ is
classically minimal and
$(C^2)=-2$. Let $\pi\colon S\to T$ be the contraction of $C$.
Then $T$ is a singular  Del Pezzo surface with  Picard number 1 over $k$ and
one of the following holds.
\begin{enumerate}
\item $T$ is a quadric cone, hence
$S$ is a $\p^1$-bundle over a smooth, rational curve
  and $C\subset S$ is a section.
\item $T$ is a  degree 1 Del Pezzo surface. Furthermore,
there is a smooth, degree 2 Del Pezzo surface $S_1$
with Picard number 1 and a rational curve $C_1\in |-K_{S_1}|$
with a unique singular point $p_1\in C_1$ such that
$S= B_{p_1}S_1$ and $C$ is the birational transform of $C_1$.
\item  $T$ is a  degree 2 Del Pezzo surface. Furthermore,
there is a conic bundle structure
$\rho:S\to B$ whose fibers are the curves in
$|-K_T|$ that pass through the singular point.
The curve $C$ is a double section of $\rho$.
\end{enumerate}
In the last 2 cases, $S$ is not rational.
\end{lem}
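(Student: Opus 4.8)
The plan is to contract $C$ and then identify $T$ from the two extremal rays of $\nec{S}$. Since $C$ is smooth rational with $(C^2)=-2$, adjunction gives $(K_S\cdot C)=0$, so $\pi\colon S\to T$ is crepant, $\pi^*K_T=K_S$, and $T$ acquires a single $A_1$ Du~Val point; thus $K_T$ is Cartier, $T$ has canonical singularities, is geometrically rational, and $(K_T^2)=(K_S^2)$.

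The heart of the matter---and the step I expect to be hardest---is to prove $\rank\pic(T)=1$; equivalently $\rank\pic(S)=2$, since $\pi$ contracts the single curve $C$. Granting this, $\pic(T)=\z\cdot H$ with $H$ ample, and as $T$ is geometrically rational $-K_T$ is big, hence a positive multiple of $H$, hence ample, so $T$ is a Del~Pezzo surface. To pin down the two rays of $\nec{S}$ I would use classical minimality. One ray is $[C]$, which is $K_S$-trivial. Any would-be $K_T$-negative birational ray on $T$ lifts to a $(-1)$-curve $E$ on $S$ with $-1=(E^2)+\tfrac12(C\cdot E)^2$, forcing $(C\cdot E)=0$, a $(-1)$-curve disjoint from $C$ that (\ref{mmp.steps}.3) forbids; and $C$ can never be a component of a fibre, since that would produce a chain $F'+C+F''$ in which $F',F''$ form a \emph{disjoint} conjugate pair meeting $C$ with multiplicity one each, forbidden by (\ref{mmp.steps}.2). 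What makes the bookkeeping delicate is that $S$ may still carry many geometric $(-1)$-curves: inside one reducible geometric fibre the two components meet, so such a conjugate pair is never disjoint, never triggers an elementary contraction, and is swapped by Galois, contributing nothing to $\pic(S)$. Turning these remarks into the equality $\rank\pic(S)=2$ is exactly where the delicate properties of singular Del~Pezzo surfaces enter.

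With $\nec{S}=\r_{\ge0}[C]+\r_{\ge0}[R]$ in hand, I split on the second ray $R$. If $R$ is of fibre type, write $K_S=aC+bF$ in $\pic(S)\otimes\q$; from $(K_S\cdot F)=-2$, $(K_S\cdot C)=0$, $(C^2)=-2$, $(F^2)=0$ one computes $(K_S^2)=8/(C\cdot F)^2$, so $(C\cdot F)\in\{1,2\}$. For $(C\cdot F)=1$ the curve $C$ is a section; a section would miss a component of any reducible fibre, against minimality, so $S\to B$ is a $\p^1$-bundle, $(K_S^2)=8$, and $T$ is the quadric cone (case (1)). For $(C\cdot F)=2$ the curve $C$ is a double section, every reducible fibre is a conjugate pair, $(K_S^2)=2$, and the members of $|-K_T|$ through the $A_1$ point pull back to the fibres (case (3)). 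If instead $R$ is birational, contracting the $(-1)$-curve $E$, necessarily with $(C\cdot E)\ge2$, lands on a Del~Pezzo $S_1$ of Picard number $1$; a short computation with $(C^2)=(C_1^2)-(C\cdot E)^2$, $(K_{S_1}\cdot C_1)=-(C\cdot E)$ and $\pic(S_1)=\z\cdot(-K_{S_1})$ forces $(C\cdot E)=2$, whence $C_1\in|-K_{S_1}|$ has an ordinary node, $\deg S_1=2$, and $(K_S^2)=1$ (case (2)).

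Finally, in cases (2) and (3) the surface $T$ (respectively $S_1$) is a minimal Del~Pezzo surface of degree $2$ with Picard number $1$; these are geometrically rational but not $k$-rational, so $S$ is not rational, while in case (1) the quadric cone is rational.
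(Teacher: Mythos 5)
Your endgame is essentially the paper's: once $T$ is known to be a Del Pezzo surface of Picard number $1$, your split on the second extremal ray of $S$ ($\p^1$-bundle, birational contraction, minimal conic bundle) and the computations $(K_S^2)=8/(C\cdot F)^2$ and $d_1=m^2/(m^2-2)$ reproduce the paper's Cases 3.1--3.3 correctly. But there is a genuine gap exactly where you flag one: you never prove $\rank\pic(T)=1$, and the remarks you offer do not assemble into a proof. Knowing that $S$ carries no $(-1)$-curve disjoint from $C$ and no disjoint conjugate pair meeting $C$ once each does not bound the Picard number. Moreover your identity $-1=(E^2)+\tfrac12(C\cdot E)^2$ presupposes that the curve contracted by a birational extremal ray of $T$ has self-intersection $-1$; this fails precisely when that curve passes through the node, where its self-intersection is $-1/2$ and its strict transform is a $(-1)$-curve $E$ on $S$ with $(C\cdot E)=1$ --- a configuration excluded by (\ref{mmp.steps}.4), not by your formula. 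You also never address a fiber-type contraction of $T$ itself.

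The paper closes this gap not by bookkeeping of geometric $(-1)$-curves but by running the MMP on the Du Val surface $T$: since $K_S\sim\pi^*K_T$ and $S$ is geometrically rational, $K_T$ is not nef, so the cone theorem for surfaces with Du Val singularities (see \cite[Sec.2]{MR1760882}) yields an extremal contraction $\tau\colon T\to T_1$ that is birational, a conic bundle, or the contraction of $T$ to a point. The birational case is excluded by classical minimality in both subcases: if the node $q$ avoids the exceptional curve one gets disjoint $(-1)$-curves on $S$ disjoint from $C$, and if $q$ lies on it then $\tau\circ\pi$ is a composite of two smooth blow-ups, producing a $(-1)$-curve meeting $C$ once. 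The conic-bundle case makes $\tau\circ\pi\colon S\to T_1$ a non-minimal conic bundle with $C$ in a fibre, and a contractible $(-1)$-curve in a fibre meets $C$ at most once, again contradicting minimality. Only the third case survives, and it \emph{is} the assertion that $T$ is Del Pezzo with $\rho(T)=1$. So the step you declared hardest is a short, standard trichotomy; restructure your argument around the extremal contraction of $T$ rather than attempting to enumerate the rays of $\nec{S}$ directly.
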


\begin{proof}
Let $\pi\colon S\to T$ be the contraction of $C$.
Then $T$ has an ordinary node $q\in T$.
The special feature of the $(C^2)=-2$ case is that
$K_S\sim \pi^*K_T$, thus $K_T$ is not nef since $S$ is a
smooth rational surface.
 So there is an extremal
contraction  $\tau\colon T\to T_1$.

There are 3 possibilities for $\tau$.

Case 1: $\tau$ is birational with exceptional curve $E\subset T$.
Note that, over $\bar k$, $E$ is the disjoint union of $(-1)$-curves
that are conjugate to each other over $k$.

If $q$ does not lie on $E$ then $E$ gives a  disjoint union of $(-1)$-curves
 on $S$ which is
disjoint from $C$, a contradiction to the classical minimality assumption.
 If $q$ lies on $E$ then
$E$ is geometrically irreducible and $T_1$ is smooth
since  on a surface with Du~Val singularities, every extremal contraction
results in a smooth point, cf.\ \cite[Thm.2.6.3]{MR1760882}.

Thus the composite
$\tau\circ \pi\colon S\to T_1$ consist of two smooth blow ups.
This again shows that $(S,C)$ is not classically minimal.

Case 2: $\tau\colon T\to T_1$ is a conic bundle. Then
$\tau\circ \pi\colon S\to T_1$ is a non-minimal conic bundle,
hence there is a $(-1)$-curve $E$ contained in a fiber.
$C$ is also contained in a fiber thus $(E\cdot C)\leq 1$
since any 2 irreducible curves in a fiber of a
conic bundle intersect in at most 1 point. Thus again $(S, C)$ is
not classically minimal.

Case 3: $T$ is a Del Pezzo surface of Picard number 1 over $k$.

Since $K_S\sim \pi^*K_T$,
in this case $S$ itself is a weak Del Pezzo surface
(that is $-K_S$ is nef) of Picard number 2.
 Thus $S$ has another extremal ray giving a contraction
$\rho\colon S\to S_1$.
Next we study the possible types of $\rho$.

We use that for  every Del Pezzo surface $X$, the linear system
 $|-K_X|$ has dimension $\geq 1$.
A general member of $|-K_X|$ is smooth, elliptic;
special members are either irreducible, rational with a single
node or cusp or reducible with smooth, rational geometric components.

Case 3.1: $\rho$ is a $\p^1$-bundle. Then $C$ has to be the unique
 negative section,
giving the first possibility.

Case 3.2:  $\rho$ is birational so
$S_1$ is a Del Pezzo surface of Picard number 1.
Since $(S,C)$ is classically minimal, the exceptional curve of $\rho$
has intersection number $\geq 2$ with $C$. In particular
$C_1:=\rho(C)$ is singular.

Since  $|-K_S|$ has dimension $\geq 1$,
 there is a  divisor
$D\in |-K_S|$ such that $(C\cap D)\neq \emptyset$.
 On the other hand
$(C\cdot D)=(C\cdot K_S)=0$, hence $C\subset \supp D$.

Thus $C_1:=\rho(C)$ is singular and is contained in a member of $|-K_{S_1}|$.
Thus $C_1$ is a member of $|-K_{S_1}|$ and has a node or cusp at a point $p_1$.

From $-2=(C^2)=(C_1^2)-4$
we see that
 $S_1$ is a smooth Del Pezzo surface of degree 2.
We obtain $S$ by blowing up the singular point of $C_1$
and so $(K_T^2)=(K_S^2)=(K_{S_1}^2)-1=1$; giving the second possibility.

Case 3.3.  $\rho$ is a minimal conic bundle, that is, the Picard group of $S$
is generated by $K_S$ and  a general fiber $F\subset S$  of $\rho$.
Thus $C\sim aK_S+bF$ for some $a,b\in \z$. If $(K_S^2)=d$ this gives that
$$
-2= (C^2)=a^2(K_S^2)+2ab(K_S\cdot F)=a^2d-4ab=a(ad-4b).
$$
Since $C$ is effective, $a\leq 0$, hence we see that $a=-1$ and
using that $1\leq d\leq 9$ we obtain that
either $d=2, b=-1$ or $d=6, b=-2$. In the latter, the adjunction
formula gives  $C(C+K_S)=-4$, hence $C$ is reducible. Thus
$d=2$,  giving the third possibility.

Finally note that a Del Pezzo surface of degree 2 and of
Picard number 1
or a conic bundle of degree 2 and of
Picard number 2
is never rational over the ground field $k$
by the Segre--Manin theorem; see \cite{segre51, M66}
or \cite[Chap.2]{ksc} for an introduction to these results.
\end{proof}

The main difficulty with the $(C^2)\leq -3$ cases is that
contracting such a curve can yield a rational surface
with trivial or ample canonical class.
Here are some simple examples of this. For $d=6$ the example
below has $(C^2)=-4$; we do not know such pairs with
$(C^2)=-3$.

\begin{exmp} \label{ample.K.exmp}
Let $\bar C_d\subset \p^2$ be a rational curve of degree $d$
whose singularities are nodes. Thus we have  $\binom{d-1}{2}$ nodes
forming a set $N_d$. Let $p_d\colon S_d:=B_d\p^2\to \p^2$ denote the blow-up of all
the nodes with exceptional curves $E_d$
and $ C_d\subset  S_d$ the birational (or strict) transform
of $\bar C_d$. We compute that
$\bigl( C_d^2\bigr)=d^2-4\binom{d-1}{2}$
and
$$
K_{S_d}+\tfrac3{d}C_d-\bigl(1-\tfrac6{d}\bigr)E_d\sim_{\q}
p_d^*\bigl(K_{\p^2}+\tfrac3{d}\bar C_d\bigr)\sim_{\q} 0.
$$

If $d\geq 6$ then $\bigl( C_d^2\bigr)<0 $; let $\pi\colon S_d\to T_d$
be its contraction. Then
$$
K_{T_d}\sim_{\q} \bigl(1-\tfrac6{d}\bigr)\pi_*E_d
$$
is trivial for $d=6$ and ample for $d\geq 7$.
For $d=6$ this is a Coble surface \cite{dol-zha}.

\end{exmp}

\section{Topology of pairs  $(S,L)$}\label{sec.top}

In this section let $S$ denote the real part of a smooth, projective,
real algebraic surface that is rational over $\r$.
 By Theorem~\ref{Co14.thm}, $S$ is either
$\sph, \trs$ or $\# r\rp$ for some $r\geq 1$.
Let $L\subset S$ be a connected, simple, closed curve.
We aim to classify the pairs $(S, L)$ up to
diffeomorphism.
We distinguish 4 main cases.

\begin{say}[$S$ is orientable] \label{list.2}
Thus $S\sim\sph$ or $S\sim\trs$.  There are three possibilities
\begin{enumerate}
  \item  $(S,L)\sim(\sph, \li)$,
\item $(S,L)\sim(\trs, \li)$ and
\item $(S,L)\sim(\trs, \mbox{null-homotopic curve})$.
\end{enumerate}
\end{say}

\begin{say}[$S$ is not orientable along $L$] \label{list.1}
A neighborhood of $L$ is a M\"obius band and contracting $L$ we get another
topological surface $S'$ thus
$(S,L)\sim(\rp, \li)\# S'$. This gives two possibilities
\begin{enumerate}
  \item  $(S,L)\sim(\rp, \li) \# r\rp$ for some $r\geq 0$ or
\item $(S,L)\sim(\rp, \li) \# g\trs$ for some $g> 0$.
\end{enumerate}
\end{say}

In the remaining 2 cases $S$ is non-orientable but   orientable along $L$.

\begin{say}[$L$ is non-separating] \label{list.3}
Then we have another simple closed curve $L'\subset S$
such that $S$ is non-orientable along $L'$
and $L$ meets $L'$ at a single point transversally.
Then a neighborhood of $L\cup L'$  is a punctured Klein bottle
and  $(S,L)$ is the connected sum of $(\kk, \fib)$ with another surface.
 This gives two possibilities
\begin{enumerate}
  \item  $(S,L)\sim(\kk, \fib) \# r\rp$ for some $r\geq 0$ or
\item $(S,L)\sim(\kk, \fib) \# g\trs$ for some $g> 0$.
\end{enumerate}
\end{say}

\begin{say}[$L$ is separating] \label{list.4}
Then $S\setminus L$ has 2 connected components
and  at least one of them is non-orientable.
 This gives two possibilities
\begin{enumerate}
  \item  $(S,L)\sim r_1\rp\#   (\sph, \li)\#r_2\rp$ for some
$r_1+r_2\geq 1$ or
\item $(S,L)\sim r_1\rp\#   (\sph, \li)\# g\trs$ for some
$r_1,g> 0$.
\end{enumerate}
Since we can always create a connected sum with $\rp$ by blowing up a point,
for construction purposes the only new case that matters is
\begin{enumerate}\setcounter{enumi}{2}
\item $(S,L)\sim   \rp\#   (\sph, \li)\# g\trs $ for some  $g> 0$.
\end{enumerate}
\end{say}

By the formula (\ref{mmp.top.backward.cor}.1),
we need to understand connected sum with
$(\rp, \li)$. This is again easy, but usually not treated
in topology textbooks, so we state the formulas for ease of reference.

\begin{say}[Some diffeomorphisms]\label{some.diffeos}
 We start with the list of elementary steps.
$$
\begin{array}{lcl}
(\rp, \li)\# \rp & \sim & (\kk, \li)\\
(\trs, \li)\# \rp & \sim & (\kk, \li)\# \rp\\
(\trs, \li)\# (\rp, \li) & \sim & (\kk, \li)\# \rp\\
(\kk, \li)\# (\rp, \li) & \sim & (\trs, \li)\#\rp\\
(\sph, \li) \# (\rp, \li) & \sim & (\rp, \li)\\
(\rp, \li)\# (\rp, \li) & \sim &(\kk, \fib)\\
(\kk, \fib)\# (\rp, \li) & \sim & (\rp, \li)\# \trs\\
\end{array}
\eqno{(\ref{some.diffeos}.1)}
$$
There are -- probably many -- elementary topological ways to see these.
An approach using algebraic geometry is the following.

For the first, blow up a point in $\p^2$ not on the line $\li$.
We get a  minimal ruled surface over $\p^1$ and the line becomes a  section.

For the next three, take a minimal ruled surface $S$ over $\p^1$
with negative section $E$. If $(E^2)$ is even then
$\bigl(S(\r), E(\r)\bigr)\sim (\trs, \li)$ and if  $(E^2)$ is odd then
$\bigl(S(\r), E(\r)\bigr)\sim (\kk, \li)$. Blowing up a point on $E$ changes the
parity of $(E^2)$. Also, the fiber through that point becomes a
$(-1)$-curve $F'$ disjoint from the birational transform $E'$ of $E$.
We can contract $F'$ to get a
minimal ruled surface $S'$ over $\p^1$.

Blowing up a point  $p\in \li\subset \sph$ we get
$(\sph, \li) \# (\rp, \li)$. The conjugate lines through
$p$ become conjugate $(-1)$-curves and contracting them gives
$(\rp, \li) $.

Blowing up a point  $p\in \li\subset \rp$ we get a
 minimal ruled surface $S$ over $\p^1$. The exceptional curve $E$
is the negative section  and the birational transform $\li'$
of $\li$ is a fiber; this is $ (\kk, \fib)$.

Blowing up a point  $p\in \fib\subset \kk$, the
birational transform $\fib'$
of $\fib$ is a  $(-1)$-curve. As discussed at the beginning, contracting it
we get $\trs$, giving the last diffeomorphism.

Iterating these, we get the following list.
$$
\begin{array}{lcl}
(\trs, \li)\# 2r(\rp, \li) & \sim & (\trs, \li)\# 2r\rp\\
(\trs, \li)\# (2r+1)(\rp, \li) & \sim & (\kk, \li)\# (2r+1)\rp\\
(\kk, \li)\# 2r(\rp, \li) & \sim & (\kk, \li)\# 2r\rp\\
(\kk, \li)\# (2r+1)(\rp, \li) & \sim & (\trs, \li)\# (2r+1)\rp\\
(\sph, \li) \# 2r(\rp, \li) & \sim &(\kk, \fib)\# (r-1)\trs\quad (r\geq 1)\\
(\sph, \li) \# (2r+1)(\rp, \li) & \sim &(\rp, \li)\# r\trs\\
(\rp, \li) \# 2r(\rp, \li) & \sim &(\rp, \li)\# r\trs\\
(\rp, \li) \# (2r+1)(\rp, \li) & \sim &(\kk, \fib)\# r\trs\\
(\kk, \fib)\# 2r(\rp, \li) & \sim & (\kk, \fib)\# r\trs\\
(\kk, \fib)\#(2r+1)(\rp, \li) & \sim &(\rp, \li)\# (r+1)\trs
\end{array}
\eqno{(\ref{some.diffeos}.2)}
$$

\end{say}

\section{Proofs of the Theorems}\label{sec.pfs}

\begin{say}[Proof of Theorem \ref{m.geq-2.thm}]
Let $S$ be a smooth, projective real algebraic surface
 over $\r$ and $C\subset S$ a smooth rational curve.
We run the $\bigl(K_S+(1-\epsilon)C\bigr)$-MMP while we can perform
elementary contractions to get
$$
(S, C)=(S_0, C_0)\stackrel{\pi_0}{\to}(S_1, C_1)\stackrel{\pi_1}{\to}
\cdots \stackrel{\pi_{m-1} }{\to}(S_{m}, C_{m}).
$$
We saw that $(C_m^2)\geq (C^2)$.
If $S$ is rational (or geometrically rational) there is at least one more
 step of the $\bigl(K_S+(1-\epsilon)C\bigr)$-MMP
$$
\tau: (S_{m}, C_{m})\to T.
$$
If $(C_m^2)\geq 0$ then $\tau$ is a non-birational contraction as in
(\ref{mmp.steps}.6--10). The topology of
$\bigl(S_m(\r), C_m(\r)\bigr)$ is fully understood
and   Corollary \ref{mmp.top.backward.cor} shows how to get
$\bigl(S(\r), C(\r)\bigr)$ from $\bigl(S_m(\r), C_m(\r)\bigr)$.

In order to get all possible $\bigl(S(\r), C(\r)\bigr)$
with $(C^2)=e$ we proceed in 4 steps.
\begin{enumerate}
\item Describe all $\bigl(S_m(\r), C_m(\r)\bigr)$ with
$(C_m^2)\geq e$.
\item For any $0\leq r_1\leq (C^2)-(C_m^2)$ with $(C^2)-(C_m^2)-r_1$ even,
determine the topological types of
$\bigl(S_m(\r), C_m(\r)\bigr)\# r_1(\rp, \li)$, using the formulas (\ref{some.diffeos}.2).
\item For any of the surfaces  obtained in (2), determine the topological types
obtained by taking connected sum with any number of copies of $\rp$.
\item In order to get the new types in Theorem \ref{m.geq-2.thm},
for any $e$ remove those that also occur for
$e+2$.
\end{enumerate}
The   first seven lines of the table in Theorem \ref{m.geq-2.thm}
follow from these. The first 2 lines derive from the cases in
(\ref{mmp.steps}.6) the next 5 lines from
 the cases in
(\ref{mmp.steps}.7--10).

If $(C_m^2)=-1$ then we use
(\ref{-1.case.say}.1) and Corollary \ref{mmp.top.backward.cor}.

Finally, if $(C_m^2)=-2$ then
$\bigl(S_m(\r), C_m(\r)\bigr)\sim (\trs, \li)$
by Lemma \ref{-2.case.deg2.dp}, but this is already listed
in the first line.
The only new example comes from  $(S_m, C_m)=(\sph, \li) $
(corresponding to $(C_m^2)=2$) and 4 blow-ups on $C_m$:
$$
\bigl(S(\r), C(\r)\bigr)
\sim (\sph, \li)\# 4 (\rp, \li)\sim (\kk,\fib)\# \trs. \qed
$$

\end{say}

\begin{say}[Proof of Theorem \ref{thm.main}]
We already noted
  that (\ref{thm.main}.1) $\Rightarrow$ (\ref{thm.main}.2) is clear.

The converse,
(\ref{thm.main}.2) $\Rightarrow$ (\ref{thm.main}.1)
involves two steps. First, if $S_1, S_2$ are
 smooth, projective real algebraic surfaces
that are rational over $\r$ and $S_1(\r)\sim S_2(\r)$
then
 there is a birational map
$g:S_1\map S_2$ that is an isomorphism between suitable
Zariski open neighborhoods of $S_1(\r)$ and $ S_2(\r)$.
This is \cite[Thm.1.2]{bh}; see also \cite{hm3} for a more direct proof.

Thus we have  $L\subset S(\r)$ and a rational curve $C\subset S$
that is smooth at its real points and a diffeomorphism
$$
\phi: \bigl(S(\r), L\bigr)\sim \bigl(S(\r), C(\r)\bigr).
$$
By \cite{k-mang}, the diffeomorphism $\phi^{-1}$ can be
approximated in the $C^{\infty}$-topology by birational maps
$\psi_n:S\map S$  that are  isomorphisms between suitable
Zariski open neighborhoods of $S(\r)$.
Thus
$$
C_n:=\psi_n(C)\subset S
$$
is a sequence of real-smooth rational curves
and  $C_n(\r)\to L$ in the $C^{\infty}$-topology.
One can   resolve the complex singular points of $C_n$ to get
 approximation of $L$ by smooth rational curves
$\bigl(C'_n\subset S_n)$. Here the surfaces $S_n$ are
isomorphic near their real points but not  everywhere.

Again using \cite[Thm.1.2]{bh}, in order to show
(\ref{thm.main}.2) $\Rightarrow$ (\ref{thm.main}.3),
it is enough to prove that on $\p^1\times \p^1$
there are  no real-smooth  rational curves $C$ defined over $\r$
such that
$C(\r)$ is null-homotopic. This follows from a  genus computation
done in Proposition
\ref{torus.null-hom.lem}.

It remains to show that
(\ref{thm.main}.3) $\Rightarrow$ (\ref{thm.main}.2).
All possible
topological pairs $\bigl(S(\r),L\bigr)$
were  enumerated in (\ref{list.2}--\ref{list.4}).
With the exception of  cases
(\ref{list.2}.3) and (\ref{list.4}.1--2), the examples listed in
Theorem \ref{m.geq-2.thm},
and their descendants using  the formulas (\ref{some.diffeos}.2),
cover everything.
We already proved that (\ref{list.2}.3)  never occurs. This leaves us with
the task of exhibiting examples for (\ref{list.4}.1--2).
As noted there, we only need to find examples for
(\ref{list.4}.3); these are constructed next. \qed

\begin{exmp}\label{final.exmp.series}
Let $L_1,\dots, L_{g+1}$ be distinct lines through the origin in $\r^2$
and  $H(x,y)$ the equation of their union.
For some $0<\epsilon\ll 1$ let $\bar C^{\pm}\subset \p^2$
be the Zariski closure of the
image of the unit circle  $(x^2+y^2=1)$ under the map
$$
(x,y)\mapsto \bigl(1\pm\epsilon H(x,y)\bigr)(x,y).
$$
The curves  $\bar C^{\pm}$ are rational and intersect each other at the
$2g+2$ points where the unit circle intersects one of the lines $L_i$
and also at the conjugate
point pair  $(1:\pm i:0)$.  Note further
 that  $(1:\pm i:0)$ are the only points of
 $\bar C^{\pm}$ at infinity.


It is better
to use the inverse of the stereographic projection from the south pole
to compactify $\r^2_{xy}$  as the quadric
$Q^2:=(z_1^2+z_2^2+z_3^2=z_0^2)\subset \p^3$.
From $\p^2$ this is obtained by  blowing up the conjugate
point pair  $(1:\pm i:0)$ and contracting the birational
transform of the line at infinity.
We think of the image of the unit circle as the equator.
Thus we get  rational curves $C^{\pm}\subset Q^2 $.
Since $(1:\pm i:0)$ are the only points of
 $\bar C^{\pm}$ at infinity,
 the south pole is not on  the curves $C^{\pm}$
and so the real points of the curves $C^{\pm} $ are all smooth
and they  intersect each other at
$2g+2$ points on the equator.

Pick one of these points $p$ and view $C_0:=C^+\cup C^-$
as the image of a map  $\phi_0$ from the reducible curve
$B_0:=(uv=0)\subset \p^2_{uvw}$ to $Q^2$ that sends the point
$(0:0:1)$ to $p$. By \cite[Appl.17]{ar-ko} or \cite[II.7.6.1]{rc-book},
 $\phi_0$ can be deformed
to morphisms
$$
\phi_{t}: B_t:=(uv=tw^2)\to Q^2.
$$
Let $C_t\subset Q^2$ denote the image of $B_t$.
For $t$ near the origin and with suitable sign,
$C_t(\r)\subset \sph=Q(\r)$ goes around the equator twice
and has $2g+1$ self intersections; see Figure 1.

\begin{figure}
\begin{center}
\includegraphics[width=100mm]{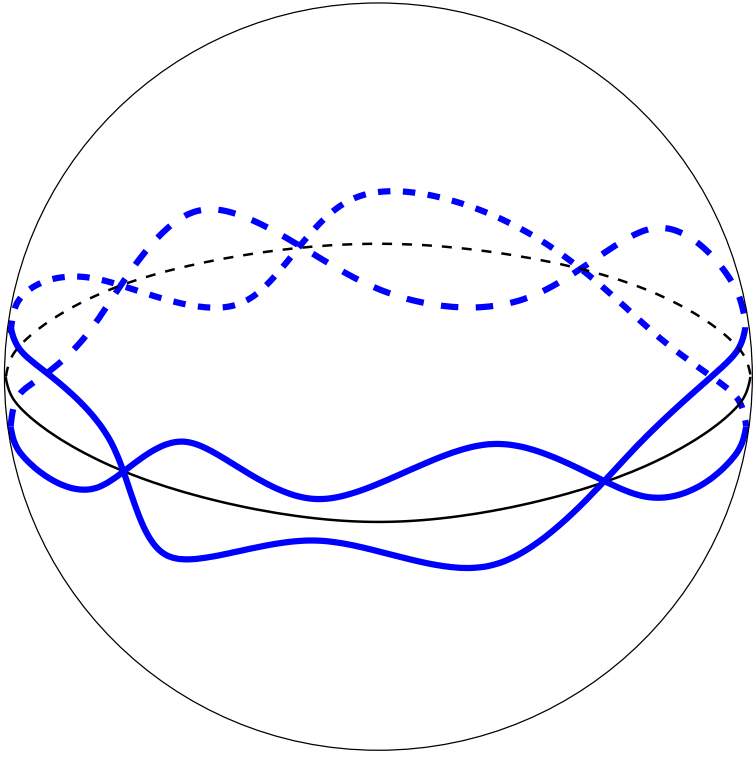}
\caption{The $g=2$ case}
\end{center}
\end{figure}

Finally we blow up the $2g+1$ real singular points of $C_t$
to get a rational surface  $S_g$. The birational transform of
$C_t$ gives a rational curve $C_g\subset S_g$ which is
smooth at its real points.

The  $2g+1$ regions of $\sph\setminus C_t(\r)$
near the equator become a M\"obius band on $S_g(\r)\setminus C_g(\r)$
and the northern and southern hemispheres
become  $\#g\trs$ (with one puncture).
Thus
$$
\bigl(S_g(\r), C_g(\r)\bigr)\sim \rp \# (\sph, \li)\#g\trs.
$$
\end{exmp}




\end{say}

\begin{prop}\label{torus.null-hom.lem}
Let  $C\subset \p^1\times \p^1$ be a  real-smooth
rational curve defined over $\r$.
 Then $[C(\r)]\in H_1(\trs, \z/2)$ is nonzero.
\end{prop}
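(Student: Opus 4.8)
The plan is to reduce the assertion to a parity statement about the bidegree of $C$. Write the class of $C$ in $H_2(\p^1\times\p^1,\z)$ as $af_1+bf_2$, where $f_1,f_2$ are the classes of the two rulings; thus $a=(C\cdot f_2)$ and $b=(C\cdot f_1)$. The group $H_1(\trs,\z/2)$ is generated by the real loci of the two rulings, and the mod $2$ intersection pairing on it is the standard nondegenerate symplectic form on $(\z/2)^2$. By the congruence $\bigl(C_1(\r)\cdot C_2(\r)\bigr)\equiv\bigl(C_1\cdot C_2\bigr)\bmod 2$ from the Definition of intersection numbers, the pairing of $[C(\r)]$ with these two generators returns $(C\cdot f_1)\bmod 2$ and $(C\cdot f_2)\bmod 2$. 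Since the pairing is nondegenerate, $[C(\r)]=0$ holds if and only if both $a$ and $b$ are even.

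Assume, for contradiction, that $[C(\r)]=0$, so $a$ and $b$ are both even. I would then compute the arithmetic genus of $C$ by adjunction on $\p^1\times\p^1$: from $K_{\p^1\times\p^1}\sim -2f_1-2f_2$ one gets $2p_a(C)-2=\bigl(C\cdot(C+K)\bigr)=2ab-2a-2b$, hence $p_a(C)=(a-1)(b-1)$. With $a$ and $b$ both even, this number is odd.

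The final step is to show that $p_a(C)$ must in fact be even, yielding the contradiction. Since $C$ is rational, its normalization $\bar C\cong\p^1$ has geometric genus $0$, so $p_a(C)=\sum_{p}\delta_p$, the sum of the $\delta$-invariants over the singular points of $C$ regarded over $\c$. Because $C$ is real-smooth, none of these singular points is real; and because $C$ is defined over $\r$, they occur in complex conjugate pairs $\{p,\bar p\}$ with $\delta_p=\delta_{\bar p}$. Each pair therefore contributes an even number $2\delta_p$ to the sum, so $p_a(C)$ is even. This contradicts the previous paragraph, and hence $[C(\r)]\neq 0$. The only genuine content beyond bookkeeping lies in this last parity argument: one must invoke the real-smoothness hypothesis to exclude real singular points and thereby force the $\delta$-contributions to pair up under conjugation — precisely the step that would fail for a curve with a real node.
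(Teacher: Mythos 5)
Your proposal is correct and follows essentially the same route as the paper: reduce $[C(\r)]=0$ to the parity of the bidegree via the mod-$2$ congruence of intersection numbers, compute $p_a(C)=(a-1)(b-1)$ by adjunction to get an odd arithmetic genus, and derive a contradiction because conjugation pairs up the (necessarily non-real) singularities. Your use of $\delta$-invariants in the last step is in fact slightly more careful than the paper's phrasing in terms of the ``number of singular points,'' but the argument is the same.
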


Proof. Let $E_1, E_2$ denote  a horizontal (resp.\ vertical)
complex  line on $\p^1\times \p^1$.
Every complex algebraic curve $C$ has homology class $a_1E_1+a_2E_2$ for some
$a_1, a_2\geq 0$.  Furthermore, if $C$ is defined over $\r$ then
$$
a_i=(C\cdot E_{3-i})\equiv \bigl(C(\r)\cdot E_{3-i}(\r)\bigr)  \mod 2.
$$
Thus if $[C(\r)]\in H_1(\trs, \z/2)$ is zero  then $a_1, a_2$ are even.
By the adjunction formula
$$
2p_a(C)-2=\bigl(  a_1E_1+a_2E_2\bigr)\cdot\bigl(  (a_1-2)E_1+(a_2-2)E_2\bigr)
=a_1(a_2-2)+a_2(a_1-2),
$$
hence $p_a(C)=(a_1-1)(a_2-1)$.
Thus, if the $a_i$ are even then
 $p_a(C) $ is odd.
Therefore,  if $C$ is rational then  it has an odd number of singular points
and at least one of them has to be real.\qed






\section{Related approximation problems}\label{sect.probs}

\begin{say}[Approximation of curves on algebraic  surfaces]
When $S$ is a non-rational surface, we can ask for
several possible analogs of
 Theorem~\ref{thm.main}.

On many surfaces there are no rational curves at all,
thus the best one can hope for is approximation by higher genus curves.
Even for this, there are several well known obstructions.

First of all,  given a real algebraic surface $X$, a necessary condition for
 a smooth curve $C$ to admit an approximation by an algebraic curve is that
 its fundamental class $[C]$ belong to the group of algebraic cycles
 $\operatorname{H}^1_{\mathrm{alg}}(X,\z/2)$. The latter group is generally
 a proper subgroup of the cohomology group $\operatorname{H}^1(X,\z/2)$.
See \cite{bm61}  and  \cite[Sec.12.4]{bcr} for details.

The structure of these groups for various real algebraic surfaces of special
 type is computed in
\cite{ma94, ma97, mvh98, ma00, ma03}. These
 papers contain  the classification of totally algebraic
surfaces, that is surfaces such that
$\operatorname{H}^1_{\mathrm{alg}}(S,\z/2)=\operatorname{H}^1(S,\z/2)$,
among K3, Enriques, bi-elliptic, and properly elliptic surfaces.
In particular,
if $S$ is a non-orientable surface underlying an Enriques surface or a
bi-elliptic surface, then there are simple, connected, closed  curves
 on $S$ with no approximation by any algebraic
curve, see \cite[Thm.1.1]{mvh98} and \cite[Thm.0.1]{ma03}.

If $S$ is orientable, there can be further obstructions
involving $\operatorname{H}^1(S,\z)$.
For instance, let $S\subset \r\p^3$ be a very general  K3 surface.
By the Noether--Lefschetz theorem, the Picard group of $S(\c)$ is generated by
the hyperplane class. If $S$ is contained in $\r^3$ then
 the restriction of $\o_{\p^3}(1)$
to $S$ is trivial, thus only null-homotopic curves can be
approximated by algebraic curves.

Note also that if $S$ is a real K3 surface, then
by \cite{ma97}, there is a totally algebraic real K3 surface real deformation
 equivalent to $S$ (at least if $S$ is a non-maximal surface) thus in
general there is no purely topological obstruction to approximability for
 real K3 surfaces.
\end{say}

\begin{say}[Approximation of curves on geometrically rational   surfaces]
Geometrically rational  surface contain many rational curves,
so  approximation by real-smooth rational curves could be possible.
Any geometrically rational surface is totally algebraic but
there are not enough automorphisms to approximate all
diffeomorphisms, at least if the
number of connected components is greater than 2; see~\cite{bm1}.

Another  obstruction arises from the genus formula.
 For example, let $S$ be a degree~$2$ Del Pezzo surface with Picard number
$\rho(S)=1$ and $C\subset S$ a curve on it. Then $C\sim -aK_S$ for some
positive integer $a$ and so $C(C+K_S)=2a(a-1)$ is divisible by $4$. Thus
 the arithmetic genus $p_a(C)$ is odd hence every real rational curve on
 $S$ has an odd number of singular
points on $S(\c)$. These can not all be complex conjugate, thus
every  rational curve on $S$  has a real singular point.

It seems, however, that this type of parity obstruction for
approximation
does not occur on any other geometrically rational surface.
We hazard the hope that if $S$ is a geometrically rational surface
then every simple, connected, closed curve
can be approximated by real-smooth rational curves,
save when
either $S\sim \trs$ or
$S$ is  isomorphic to a degree~$2$ Del Pezzo surface with Picard number $1$.

As another  generalization, one can study such problems for
singular  rational surfaces as in  \cite{hm4}. See also the series
\cite{cm1}, \cite{cm2} for the classification of
geometrically rational surfaces with Du~Val  singularities.
\end{say}

\begin{say}[Approximation of curves on higher dimensional varieties]
\label{higher.dim.say}
As for surfaces, we can hope to approximate every
simple, connected, closed curve on a real variety $X$
by a nonsingular rational curve
over $\r$ only if there are many rational curves on the
corresponding complex variety $X(\c)$.
First one should consider rational varieties.
\medskip

{\it Proposition} \ref{higher.dim.say}.1.
Let $X$ be a smooth, projective, real  variety of dimension
$\geq 3$ that is rational. Then  every
simple, connected, closed curve  $L\subset X(\r)$ can be  approximated
by smooth rational curves.
\medskip

Proof. Represent $L$ as the image of an embedding
$\s^1\to X(\r)$.
The proof of \cite{bk3}
 automatically produces  approximations
by  maps  $g:\p^1\to X$ such that
$g^*T_X$ is ample. By an easy lemma
(cf.\ \cite[II.3.14]{rc-book}) a general small perturbation of
any morphism $g:\p^1\to X$ such that
$g^*T_X$ is ample is an embedding. \qed
\medskip

The next class to consider is geometrically rational varieties, or,
more generally, rationally connected varieties \cite[Chap.IV]{rc-book}.

Let $X$ be a smooth, real variety such that $X(\c)$ is rationally connected.
By  a combination of \cite[Cor.1.7]{k-loc} and \cite[Thm.23]{k-spec},
 if $p_1,\dots, p_s\in X(\r)$ are in the same connected component
then there is a rational curve  $g:\p^1\to X$ passing through
all of them.  By the previous argument,
we can even choose $g$ to be an embedding if $\dim X\geq 3$.
Thus $X$ contains plenty of smooth rational curves.

Nonetheless, we believe that usually not every homotopy class
of $X(\r)$ can be represented by rational curves.
The following example illustrates some of the possible obstructions.
\medskip

{\it Example} \ref{higher.dim.say}.2.
Let $q_1, q_2, q_3$ be quadrics such that $C:=(q_1=q_2=q_3=0)\subset \p^4$
 is a smooth curve
with $C(\r)\neq\emptyset$. Consider the family of 3--folds
$$
X_t:= \bigl(q_1^2+q_2^2+q_3^2-t\bigl(x_0^4+\cdots + x_4^4)=0\bigr)\subset \p^4
$$
For $0<t\ll 1$, the real points  $X_t(\r)$ form an ${\mathbb S}^2$-bundle
over $C(\r)$.
We conjecture that if $0<t\ll 1$,  then  every rational curve
$g\colon\p^1\to X_t$ gives a null-homotopic map $g\colon\r\p^1\to X_t(\r)$.

We do not know how to prove this, but the following argument shows
that if $g_t:\p^1\to X_t$ is a continuous family of rational curves defined for
every $0<t\ll 1$,  then $g_t:\r\p^1\to X_t(\r)$ is null-homotopic.
More precisely, the images $g_t(\r\p^1)\subset X_t(\r)$
shrink to a point as $t\to 0$.

Indeed, otherwise by taking the limit as $t\to 0$, we get a
non-constant map  $\tilde g_0:\p^1\to C$.
However, the genus of $C$ is 5, hence every map $\p^1\to C$
is constant.
(A priori, the limit, taken in the moduli space of stable maps
as in \cite{ful-pan},
is a morphism $g_0:B\to X_0$ where $B$ is a (usually reducible)
real curve with only nodes as singularities such that $h^1(B,\o_B)=0$.
For such curves, the set of real points $B(\r)$ is a connected set.
Thus  the image of $B(\r)$
is a connected subset of $X_0(\r)$ that contains
 at least 2 distinct  points.
 Since $X_0(\r)=C(\r)$, one of the irreducible components of $B$ gives a
non-constant map  $\tilde g_0:\p^1\to C$.)

Unfortunately, this only implies that if we have a sequence
$t_i\to 0$ and a sequence of homotopically nontrivial
rational curves $g_{t_i}:\p^1\to X$ then their degree must go to
infinity.  We did not exclude the possibility that, as $t_i\to 0$,
we have higher and higher
degree maps approximating non null-homotopic loops.

We do not have a conjecture about which homotopy classes give
obstructions. On the other hand,
while we do not have much evidence, the following could be true.
\medskip

{\it Conjecture} \ref{higher.dim.say}.3. Let $X$ be a smooth,
rationally connected variety defined over $\r$.
Then a  $C^{\infty}$ map ${\mathbb S}^1\to X(\r)$
can be approximated by rational
 curves iff it is homotopic to a rational curve
$\r\p^1\to X(\r)$.
\end{say}

\def\cprime{$'$} \def\cprime{$'$} \def\cprime{$'$} \def\cprime{$'$}
  \def\cprime{$'$} \def\cprime{$'$} \def\dbar{\leavevmode\hbox to
  0pt{\hskip.2ex \accent"16\hss}d} \def\cprime{$'$} \def\cprime{$'$}
  \def\polhk#1{\setbox0=\hbox{#1}{\ooalign{\hidewidth
  \lower1.5ex\hbox{`}\hidewidth\crcr\unhbox0}}} \def\cprime{$'$}
  \def\cprime{$'$} \def\cprime{$'$} \def\cprime{$'$}
  \def\polhk#1{\setbox0=\hbox{#1}{\ooalign{\hidewidth
  \lower1.5ex\hbox{`}\hidewidth\crcr\unhbox0}}} \def\cdprime{$''$}
  \def\cprime{$'$} \def\cprime{$'$} \def\cprime{$'$} \def\cprime{$'$}
\providecommand{\bysame}{\leavevmode\hbox to3em{\hrulefill}\thinspace}
\providecommand{\MR}{\relax\ifhmode\unskip\space\fi MR }
\providecommand{\MRhref}[2]{%
  \href{http://www.ams.org/mathscinet-getitem?mr=#1}{#2}
}
\providecommand{\href}[2]{#2}

\end{document}